\newtheorem{theorem}{Theorem}
\crefname{theorem}{theorem}{Theorems}
\Crefname{Theorem}{Theorem}{Theorems}
\newaliascnt{lemma}{theorem}
\newtheorem{lemma}[lemma]{Lemma}
\crefname{lemma}{lemma}{lemmas}
\Crefname{Lemma}{Lemma}{Lemmas}
\newaliascnt{corollary}{theorem}
\newtheorem{corollary}[corollary]{Corollary}
\crefname{corollary}{corollary}{corollaries}
\Crefname{Corollary}{Corollary}{Corollaries}
\newaliascnt{proposition}{theorem}
\crefname{proposition}{proposition}{propositions}
\Crefname{Proposition}{Proposition}{Propositions}
\newaliascnt{definition}{theorem}
\crefname{definition}{definition}{definitions}
\Crefname{Definition}{Definition}{Definitions}
\newaliascnt{remark}{theorem}
\newtheorem{remark}[remark]{Remark}
\crefname{remark}{remark}{remarks}
\Crefname{Remark}{Remark}{Remarks}
\newtheorem{example}[theorem]{Example}
\crefname{example}{example}{examples}
\Crefname{Example}{Example}{Examples}
\crefname{figure}{figure}{figures}
\Crefname{Figure}{Figure}{Figures}
\newtheorem{assumption}{\textbf{H}\hspace{-3pt}}
\Crefname{assumption}{\textbf{H}\hspace{-3pt}}{\textbf{H}\hspace{-3pt}}
\crefname{assumption}{\textbf{H}}{\textbf{H}}
\Crefname{assumptionL}{\textbf{L}\hspace{-3pt}}{\textbf{L}\hspace{-3pt}}
\crefname{assumptionL}{\textbf{L}}{\textbf{L}}
\Crefname{assumptionG}{\textbf{G}\hspace{-3pt}}{\textbf{G}\hspace{-3pt}}
\crefname{assumptionG}{\textbf{G}}{\textbf{G}}
\definecolor{red}{rgb}{1,0,0}
\definecolor{green}{rgb}{0,1,0}
\definecolor{blue}{rgb}{0,0,1}
\title{{An elementary approach to uniform in time propagation of chaos}}
 \author[]{Alain Durmus, Andreas Eberle, Arnaud Guillin, Raphael Zimmer}
\address{ENS Paris-Saclay\\
Centre de Math\'ematiques et leurs Applications\\
61 avenue du Pr\'esident Wilson\\
94235 Cachan, France}
\email{alain.durmus@cmla.ens-cachan.fr}
\address{Universit\"at Bonn\\
Institut f\"ur Angewandte Mathematik\\
 Endenicher Allee 60\\
  53115 Bonn, Germany}
\email{eberle@uni-bonn.de, Raphael@infoZimmer.de}
\urladdr{http://www.uni-bonn.de/$\sim$eberle}
\address{Laboratoire de Math\'ematiques Blaise Pascal\\
CNRS - UMR 6620\\
Universit\'e Clermont-Auvergne\\
Avenue des landais,\\
63177 Aubiere cedex, France}
\email{guillin@math.univ-bpclermont.fr}
\urladdr{http://math.univ-bpclermont.fr/$\sim$guillin}
\date{}
\newcommandx{\functionspace}[2][1=+]{\mathbb{F}_{#1}(#2)}
\newcommand{\2}[2]{\2{#1}{#2}}
\newcommand{\borelSet}{\mathcal{B}}
\newcommand{\LeftEqNo}{\let\veqno\@@leqno}
\newcommand{\PE}{\mathbb{E}}
\newcommandx{\Vnorm}[2][1=V]{\| #2 \|_{#1}}
\newcommandx{\VnormEq}[2][1=V]{\left\| #2 \right\|_{#1}}
\newcommandx{\norm}[2][1=]{\ifthenelse{\equal{#1}{}}{\left\Vert #2 \right\Vert}{\left\Vert #2 \right\Vert^{#1}}}
\newcommandx{\normsup}[2][1=]{\ifthenelse{\equal{#1}{}}{\left\Vert #2 \right\Vert_{\infty}}{\left\Vert #2 \right\Vert^{#1}_{\infty}}}
\newcommandx{\normLigne}[2][1=]{\ifthenelse{\equal{#1}{}}{\Vert #2 \Vert}{\Vert #2\Vert^{#1}}}
\newcommandx{\normLine}[2][1=]{\ifthenelse{\equal{#1}{}}{\Vert #2 \Vert}{\Vert #2\Vert^{#1}}}
\newcommand{\parenthese}[1]{\left(#1 \right)}
\newcommand{\defEns}[1]{\left\lbrace #1 \right\rbrace }
\newcommand{\ps}[2]{\left\langle#1,#2 \right\rangle}
\newcommandx\probaMarkovTilde[2][2=]
\newcommandx{\expe}[2][1=]{\ifthenelse{\equal{#1}{}}{\PE \left[ #2 \right]}{\PE^{#1}\left[ #2 \right]}}
\def\eqsp{\;}
\newcommand{\coint}[1]{\left[#1\right)}
\newcommand{\ooint}[1]{\left(#1\right)}
\newcommand{\ccint}[1]{\left[#1\right]}
\newcommandx{\weight}[2][2=n]{\omega_{#1,#2}^N}
\def\rmd{\mathrm{d}}
\newcommandx\sequence[3][2=,3=]
\newcommandx{\sequencen}[2][2=n\in\N]{\ensuremath{(#1)_{#2}}}
\newcommandx\sequenceDouble[4][3=,4=]
\newcommandx{\sequencenDouble}[3][3=n\in\N]{\ensuremath{\{ (#1_{n},#2_{n}), \eqsp #3 \}}}
\def\rme{\mathrm{e}}
\def\rset{\mathbb{R}}
\def\nset{\mathbb{N}}
\newcommandx{\CPE}[3][1=]{{\mathbb E}^{#3}_{#1}\left[#2 \right]} 
\newcommandx{\CPVar}[3][1=]{\mathrm{Var}^{#3}_{#1}\left\{ #2 \right\}}
\newcommand{\CPP}[3][]
{\ifthenelse{\equal{#1}{}}{{\mathbb P}\left(\left. #2 \, \right| #3 \right)}{{\mathbb P}_{#1}\left(\left. #2 \, \right | #3 \right)}}
\newcommand{\chunk}[4][]%
{\ifthenelse{\equal{#1}{}}{\ensuremath{{#2}_{#3:#4}}}{\ensuremath{#2^#1}_{#3:#4}}
}
\def\Id{\operatorname{Id}}
\def\eventA{\mathsf{A}}
\newcommand{\N}{\mathbb{N}}
\newcommandx{\Phiverlet}[2][1=,2=]{\ifthenelse{\equal{#1}{}}{\Phi}{\Phi_{#1}^{\circ (#2)}}}
\def\barX{\bar{X}}
\newcommandx{\lawX}[2][2=]{\mathcal{L}(#1_{#2})}
\def\phir{\phi_{\operatorname{r}}}
\def\phis{\phi_{\operatorname{s}}}
\def\tildeB{\tilde{B}}
\def\diffX{E}
\def\diffXn{e}
\def\transpose{\operatorname{T}}
\def\RZ{R_0}
\def\RO{R_1}
\def\f{f}
\def\g{g}
\def\c{c}
\newcommandx{\fc}[1][1=]{\ifthenelse{\equal{#1}{}}{\f}{\f\left( #1\right)}}
\newcommandx{\fcp}[1][1=]{\ifthenelse{\equal{#1}{}}{\f'}{\f'\left( #1\right)}}
\newcommandx{\fcpp}[1][1=]{\ifthenelse{\equal{#1}{}}{\f''}{\f''\left( #1\right)}}
\def\wasserstein{\mathcal{W}}
\def\ctheo{C}
\def\mV{m_V}
\def\MV{M_V}
\def\barmu{\bar{\mu}}
\def\etal{et al.}
\begin{document}

\begin{abstract}
Based on a coupling approach,
we prove uniform in time propagation of chaos for weakly interacting mean-field particle systems with possibly non-convex confinement and interaction potentials. The approach is based on a combination of reflection and synchronous couplings applied to the individual particles. It provides explicit quantitative bounds that significantly extend previous results for the convex case.
%
\end{abstract}
\maketitle
\section{Introduction}

Let $V,W : \rset^d \to \rset$ be twice continuously differentiable functions satisfying appropriate regularity and growth conditions. We
consider the \emph{kinetic Fokker-Planck equation}
\begin{equation}
\label{eq:non_linear_pde}
\partial_t \mu_t=\nabla\cdot[\nabla \mu_t+(\nabla V+\nabla W *\mu_t)\mu_t]
\end{equation}
and its probabilistic counterpart, the nonlinear stochastic differential equation 
\begin{equation}
  \label{eq:non_linear_sde}
\rmd \barX_t \ =\ -\nabla V(\barX_t) \, \rmd t\, -\, \nabla W \ast \mu_t (\barX_t)\,  \rmd t\,  + \, \sqrt{2}\, \rmd B_t \eqsp, \qquad
\mu_t\ =\  \lawX{\bar X}[t] \eqsp,
\end{equation}
of \emph{McKean-Vlasov} type.
Here $\mu_t$ is a time dependent probability measure on $\mathbb R^d$ and $*$ denotes the standard convolution operator. The function $V$ corresponds to a \emph{confinement potential} and the function $W$ to an \emph{interaction potential}.
Variants of the equation occur for example in the modelling of granular media, cf.\ 
\cite{BCCP,VillaniGranular}.

Both in the probability and in the p.d.e.\ community, existence and uniqueness 
of \eqref{eq:non_linear_pde} and \eqref{eq:non_linear_sde}
have attracted much attention, see \cite{McK,Fun,Szn91,Mel96} for a few milestones, and \cite{MV} and \cite{HSS} for two recent results.  During the last twenty years, there has been a lot
of progress on convergence to equilibrium of solutions $(\mu_t)_{t \geq 0}$ of
\eqref{eq:non_linear_pde}. Carrillo, McCann and Villani
\cite{cmcv-03,cmcv-06} have proven an exponential convergence rate
under the strict convexity condition
$\mbox{Hess}(V+2W)\ge\rho \Id$ with $\rho >0$. They have also established a
polynomial convergence rate in the case where $V+2W$ is only
degenerately strictly convex with $\mathrm{Hess}(V+2W)(x) = 0$ for
some isolated points $x \in \rset^d$. Malrieu
\cite{malrieu03} and Cattiaux \etal~\cite{cgm-08}
have developed a probabilistic approach to these results that is based 
on an approximation by the mean-field particle system which is
defined for $N\in\mathbb N$ as the solution of the equations
\begin{equation}
  \label{eq:sde_particle}
\rmd X_t^{i,N} = -\nabla V(X_t^{i,N})\, \rmd t \, -\, N^{-1} \sum_{j=1}^N \nabla W (X_t^{i,N}-X_t^{j,N})\, \rmd t \,  +\, \sqrt{2} \rmd B_t^{i} \eqsp,\quad i=1,\ldots ,N,
\end{equation}
where the initial values $X_0^1,\ldots ,X_0^N$ are i.i.d.\ random variables, and the processes $(B_t^1)_{t\ge 0}$, $\ldots ,(B_t^N)_{t\ge 0}$ are independent Brownian motions.  It is well-known
\cite{Szn91,Mel96} that under weak assumptions on $V$ and $W$, the laws of the
particles at time $t$ converge to the solution $\mu_t$ of \eqref{eq:non_linear_pde} as $N\to\infty$. In \cite{malrieu03,cgm-08},
both convergence to equilibrium for the nonlinear SDE \eqref{eq:non_linear_sde} and uniform in time propagation of chaos for the particle system have been proven under convexity assumptions
by using functional inequalities and synchronous couplings, respectively.\smallskip

When $V+2W$ is not convex, the situation is much more delicate.
Uniqueness of a stationary solution of \eqref{eq:non_linear_pde} does
not hold in general without additional conditions on $V$ and $W$. Even in this case,
only few results on convergence to equilibrium are known. By a direct study
of the dissipation of the Wasserstein distance, Bolley, Gentil and Guillin
\cite{bgg-13} established an exponential trend to equilibrium in
a weakly non-convex case. In a recent work by three of the authors \cite{egz-16},
an exponential contraction property and thus convergence to equilibrium could be
shown for a much broader class of potentials. The proof is based on a new coupling approach originating in \cite{Eberle2015}, which is also the basis of this work.
An interesting related problem 
arises when there are multiple invariant measures, see
\cite{tugaut2012}. In this widely open case, a main interest are the relative basins
of attraction of the equilibria.\smallskip

%
%

The convergence of the empirical measure of the particle system \eqref{eq:sde_particle} to the solution of \eqref{eq:non_linear_pde}, or, equivalently, the convergence of the pair empirical measure to the tensor product of two solutions of \eqref{eq:non_linear_pde}, has been stated under the name of \emph{propagation of chaos} by Kac \cite{Kac}, and further developed by Sznitman \cite{Szn91}. However, the corresponding general results are only valid uniformly for a 
fixed time horizon. A 
crucial point is then not only to assert the convergence of the empirical measures but to quantify it. A remarkable analytic framework providing also quantitative propagation of chaos estimates has been
developed by Mischler, Mouhot and their coauthors in connection with Kac's program
in kinetic theory \cite{mm13,hm14,Mmmw15}.\smallskip

In this article, we propose a very different and much more elementary, probabilistic approach to quantitative bounds for
propagation of chaos.
Our main result is a \emph{uniform in time propagation of chaos} bound
that takes the form
 \begin{equation*}
    \wasserstein_{\ell^1 (\fc )}\left( \mathcal L (X_t^{1,N},\ldots, X_t^{N,N}),
    \mu_{t}^{\otimes N}  \right)    
   \  \leq\  A\cdot N^{-1/2}\qquad\text{for all }t\ge 0\text{ and }n\in \mathbb N \eqsp,
  \end{equation*} 
if $X_0^1,\ldots ,X_0^n$ are i.i.d.\ with initial law $\mu_0$, see Theorem \ref{theo:unif_prop} below. Here $ \wasserstein_{\ell^1 (\fc )}$ is an $L^1$ Wasserstein distance on $(\mathbb R^d)^N$ and $A$ is an explicit finite constant, see below for the precise definitions.
The bound holds under similar assumptions as the quantitative bounds on convergence 
to equilibrium for McKean-Vlasov equations in \cite{egz-16}. In particular, it applies in non-convex cases provided the confinement potential $V$ is strictly convex outside a 
ball, and the interaction potential $W$ is symmetric and globally Lip\-schitz continuous with sufficiently small Lipschitz constant. Consequently, our results are significant extensions 
of the uniform in time propagation of chaos results in the convex case in \cite{malrieu03,cgm-08}. The main difference to the approach in these works is that we 
use a more refined coupling which is harder to analyse but much more powerful than synchronous coupling. The second main ingredient in our proofs is an
adequately constructed $L^1$ Wasserstein distance that is well adapted to the couplings we consider. Our probabilistic approach is very different from the 
analytic methods developed in \cite{mm13,hm14,Mmmw15}, and the conditions required are not easily comparable.
One advantage of the coupling approach presented here is that it is very simple
and quite robust. This might facilitate the application to other classes of models. For example, the same argument can be applied immediately if $\nabla V$ is replaced by a non-gradient drift $\beta$ that satisfies corresponding assumptions. 
Similarly, $\nabla W$ can be replaced by a more general interaction term, see Remark \ref{rem:extensions} below.\smallskip



In Section \ref{sec:results}, we present our main hypotheses and
results concerning the uniform in time propagation of chaos. The proofs are provided in 
Section \ref{sec:proofs}. Our approach is based on an interplay between reflection and synchronous coupling. It relies
heavily on the framework introduced in \cite{Eberle2015, egz-16}. Our
results use properties of the confinement
potential, which may nevertheless possess many wells. The interactions
are mainly seen as a perturbation. An interesting and
challenging problem to be taken up in forthcoming work is to prove propagation of chaos in situations where $V=0$ and $W$
is not convex, but uniqueness of a stationary distribution holds.

\section{Uniform in time propagation of chaos}\label{sec:results}

In this section we state our main results. We will first state the precise assumptions on the potentials $V$ and $W$.  Moreover, we define some functions and parameters that will determine the particular Wasserstein distance that we consider.

\subsection{Hypotheses and definitions}

We first state our assumption on the confinement potential.
\begin{assumption}
  \label{assum:kappa}
There  is a continuous function $\kappa : [0,\infty )\to \mathbb R$ satisfying 
$ \liminf\limits_{r \to \infty} \kappa(r) >0$ 
such that
\begin{equation}
\label{eq:def_kappa}
\ps{\nabla V(x)-\nabla V(y)}{x-y} \ \le\ \kappa (\| x-y\| )\,\| x-y\|^2\qquad\text{for all }x,y\in\mathbb R^d \eqsp.
\end{equation}
\end{assumption}
Note that under \Cref{assum:kappa}, there exist $\mV >0$  and $\MV \geq 0$ such that for all $x,y \in \rset^d$,
\begin{equation}
\label{eq:v_convex_inf}
  \ps{\nabla V(x)-\nabla V(y)}{x-y} \geq \mV \norm[2]{x-y} -\MV \eqsp. 
\end{equation}
Following the framework introduced in \cite{Eberle2015}, we now define constants $\RZ$ and $\RO$ as 
\begin{align}
\label{eq:def_RZ}
  \RZ & \ :=\ \inf\defEns{s \in \rset_+ \, : \, \kappa(r) \geq 0 \text{ for all } r \geq s},\\
\label{eq:def_RO}
\RO & \ :=\ \inf \defEns{s \geq \RZ \, : \, s(s-\RZ) \kappa(r) \geq 8 \text{ for all } r \geq s }\eqsp ,
\end{align}
and we consider the functions $\varphi ,\Phi ,g : \rset_+ \to \rset_+$ defined by
\begin{align}
  \label{eq:def_phi_concave}
&\varphi(r) = \exp \parenthese{-\frac 14 \int_0^r s \, \kappa_{-}(s) \rmd s} \eqsp, \quad \Phi(r) = \int_0^r \varphi(s) \rmd s \eqsp,\\
\label{eq:def_g_concave}
&\g(r)  = 1-\frac c2 \int_0^{r \wedge \RO} \Phi(s) \varphi^{-1}(s) \rmd s \eqsp,
\end{align}
where $\kappa_-=\max(0,-\kappa)$, and
\begin{equation}
  \label{eq:def_cont_rate}
  \c\ =\ \left. 1\middle/ \int_0^{\RO} \Phi(s) \varphi^{-1}(s) \rmd s \right. \eqsp.
\end{equation}
Note that $\varphi(r) = \varphi(\RZ)$ for $r \geq \RZ$, and $\g(r) = 1/2$ for
$r \geq \RO$. In addition, for all $r \in \rset_+$,
$\g(r) \in \ccint{1/2,1}$. We now define an increasing function $f:[0,\infty )\to [0,\infty )$
by
\begin{equation}
  \label{eq:def_f_concave}
\f(r) = \int_0^r \varphi(s) \g(s) \rmd s \eqsp.
\end{equation}
Since $\varphi$ and $g$ are decreasing,  $f$ is concave, {for all }$r\ge 0$,
\begin{equation}\label{flinear}
 \varphi(\RZ) r /2\ \le\ \Phi (r)/2 \ \le\ \f(r)\ \le \ \Phi (r)\ \le\ r \, \eqsp.
\end{equation}
Note that by \eqref{flinear}, $(x,y) \mapsto f(\normLigne{x-y})$
induces a distance that is equivalent to the Euclidean distance on $\mathbb R^d$. Below, we will use contraction properties in $L^1$ Wasserstein distances based 
on the underlying distance $f(\| x-y\| )$. These contraction properties will be a consequence of
the inequality
\begin{equation}
\label{eq:iode_f_concave}
  \f''(r) -r \kappa(r) \f'(r)/4\ \leq\ -\c \f(r)/2 \qquad\text{for all }r \in \rset_+ \setminus \{\RO \}\eqsp.
\end{equation}
Indeed, notice that \eqref{eq:iode_f_concave} is satisfied for
$r \in \coint{0,\RO}$ by the definitions and since $f \leq \Phi$. Moreover, for $r >R_1$, $\f(r) = \f(\RO) + \varphi(\RZ)(r - \RO)/2$, and thus by definition of $R_1$,
\begin{multline*}
    \f''(r) -r \kappa(r) \f'(r)/4  = -r \kappa(r)\varphi(\RZ) /8 \leq  -r(\RO(\RO-\RZ))^{-1} \varphi(\RZ)  \\
 \leq -\Phi(r)(\Phi(\RO)(\RO-\RZ))^{-1} \varphi(\RZ)  \leq -\frac 12\Phi(r)
\left/ \int_{\RZ}^{\RO} \Phi(s) \varphi^{-1}(s) \rmd s\right. \leq -\c \f(r)/2 \eqsp.
\end{multline*}
Here we have used that for $r\ge R_0$, $\varphi$ is constant, $\Phi(r)=\Phi(R_0)+(r-R_0)\varphi(R_0)$,
and
\begin{eqnarray*}
\lefteqn{\int\nolimits_{R_0}^{R_1}\Phi(s)\varphi(s)^{-1}\:ds 
\ =\ {\Phi(R_0)}{\varphi(R_0)^{-1}}(R_1-R_0)+(R_1-R_0)^2/2}\\
&\geq &(R_1-R_0)\left(\Phi(R_0)+(R_1-R_0)\varphi(R_0)\right)\varphi(R_0)^{-1}/2\
= \ (R_1-R_0)\Phi(R_1)\varphi(R_0)^{-1}/2\eqsp.
\end{eqnarray*}

Next, we state the assumptions on the interaction potential. 
\begin{assumption}
  \label{assum:W}
  \begin{enumerate}[label=(\roman*)]
  \item \label{assum:W_i} $W$ is symmetric, i.e., $W(x) = W(-x)$ for all $x \in \rset^d$. 
  \item \label{assum:W_ii}  There exists $\eta \in \ooint{0,\c}$ such that for all $x,y \in \rset^d$,
  \begin{equation*}
    \norm{\nabla W(x) - \nabla W(y) } \leq  \eta\, \fc[\norm{x-y}] \eqsp,
  \end{equation*}
  where $\fc$ and $\c$ are defined by \eqref{eq:def_f_concave} and 
  \eqref{eq:def_cont_rate}.
  \end{enumerate}
\end{assumption}
Notice that by \eqref{flinear}, a sufficient condition for \Cref{assum:W}-\ref{assum:W_ii} is that $\nabla W$ is $L-$Lipschitz continuous, with $L< 2c/\varphi(R_0)$. By \Cref{assum:W}-\ref{assum:W_i}, $\nabla W(0) =0$. Thus, since $\fc^\prime\le 1$, \Cref{assum:W} implies 
\begin{equation}
\label{eq:bound_nabla_W}
  \norm{\nabla W( x) } \leq \eta \norm{x} \eqsp.
\end{equation}

We consider also the following additional condition:
\begin{assumption}
  \label{ass:confine_assum_convex_inf}
  There exists $ M_W \in [0,\infty )$ such that for all $x,y \in \rset^d$,
  \begin{equation*}
    \ps{\nabla W(x)- \nabla W(y)}{x-y} \geq  -M_W \eqsp.
  \end{equation*}
\end{assumption}
Under \Cref{assum:kappa} and \Cref{assum:W}, the equations \eqref{eq:non_linear_sde}
and \eqref{eq:sde_particle} both have unique strong solutions $(\barX_t)_{t
  \geq 0}$ and $\{(X_t^i)_{t \geq 0}, i=1,\ldots,N\}$ if we suppose sufficient integrability assumptions on the initial measures (say moments of order 4), see e.g.\ \cite[Theorem 2.6]{cgm-08}. In addition, 
\begin{equation}
  \label{eq:bound_moment_T_fixed_solution}
  \sup\nolimits_{t \in \ccint{0,T}} \expe{\normLigne[2]{\bar X_t} + \normLigne[2]{X_t^1}} < \infty\qquad\text{for all }T \geq 0 \eqsp.
\end{equation}

\begin{example}[Double-well potential]
A natural example which satisfies the assumptions above is the case of a double well confinement potential with quadratic interaction
 $$V(x)=\|x\|^4-a\|x\|^2,\qquad W(x)=\pm\|x\|^2,\qquad a>0 \eqsp,$$
 where the sign of the interaction implies attractiveness or repulsion.  
Using \cite[Lemma 1 and Example 4]{Eberle2015}, one has that the constant $c$ in \eqref{eq:iode_f_concave}, which is crucial for the rate of convergence to equilibrium, is of order $\Theta ( 1)$ for small $a$, whereas for large $a$, $\log (c^{-1})$ is of order $\Theta (a^2)$.
  \end{example}

\subsection{Main results}
Let $\wasserstein_{\fc}$ denote the Kantorovich ($L^1$ Wasserstein) distance on probability measures based on the underlying distance function $f(\| x-y\| )$ on $\mathbb R^d$, i.e.,
  \begin{equation*}
    \wasserstein_{\fc}(\nu,\mu) = \inf_{\xi \in \Pi(\nu,\mu)} \int_{\rset^d \times \rset^d} \fc[\norm{x-y}] \rmd \xi(x,y) \eqsp,
  \end{equation*}
  where $\Pi(\nu,\mu)$ is the set of couplings of $\mu$ and $\nu$, i.e., probability measures $\xi$ on
  $\borelSet(\rset^d \times \rset^d)$ such that for all $\eventA \in
  \borelSet(\rset^d)$, $\xi(\eventA \times \rset^d)=\mu(\eventA)$ and
 $\xi(\rset^d \times \eventA) = \nu(\eventA)$.  In the case where $\fc$ is the identity
  function $\fc(r) \equiv r$, $\mathcal W_f$ is the usual $L^1$
  Wasserstein distance $\wasserstein_{1}$. We also consider corresponding Kantorovich distances on probability measures $\hat\nu$, $\hat\mu$ on the configuration space $(\mathbb R^{d})^N$. Here we set
\begin{equation*}
    \wasserstein_{\ell^1(\fc )}(\hat\nu,\hat\mu) = \inf_{\xi \in \Pi(\hat\nu,\hat\mu)} \int_{\rset^{N\cdot d} \times \rset^{N\cdot d}} \frac 1N\sum_{i=1}^N\fc[\norm{x^i-y^i}] \rmd \xi(x,y) \eqsp.
  \end{equation*}  
For $t \geq 0$ and $\nu,\mu \in {\mathcal P}(\rset^d)$, we denote by $\barmu_{t}^{\nu}$ and
  $\mu_t^{\mu,N}$ the laws of $\barX_t$ and $X_t^{i,N}$, for an arbitrry $i$, with initial distributions $\nu$ and $\mu$ ($\mu$ being the common distribution of the i.i.d.\ random variables $X_0^{i,N}$, ${i \in \{1,\ldots,N\}}$. We now state our main result.
 
\begin{theorem}
  \label{theo:unif_prop}
  Assume \Cref{assum:kappa} and \Cref{assum:W}, and suppose that \Cref{ass:confine_assum_convex_inf} is satisfied or
  $\eta \in \ooint{0,m_V/2}$, where $m_V$ is given by \eqref{eq:v_convex_inf}. Let $\nu$ and $\mu$ be probability
  measures on $(\rset^d, \mathcal{B}(\rset^d))$ which admit a finite
  fourth moment. Then for all $t \geq 0$ and $N \in \nset$,
 we have
  \begin{equation*}
    \wasserstein_{\fc}\left(\mu_t^{\mu,N},\barmu_{t}^{\nu} \right)\ \leq\ 
 \rme^{-2(\c-\eta)t} \wasserstein_{\fc}\left(\nu , \mu \right)
+ (2(\c-\eta))^{-1}{ \ctheo \eta  N^{-1/2} }   \eqsp,
  \end{equation*}
  and, more generally,
 \begin{equation*}
    \wasserstein_{\ell^1 (\fc )}\left( \mathcal L (X_t^{1,N},\ldots, X_t^{N,N}),
    (\barmu_{t}^{\nu})^{\otimes N}  \right)    
   \  \leq\ 
\rme^{-2(\c-\eta)t} \wasserstein_{\fc}(\nu , \mu )
+ (2(\c-\eta))^{-1}{ \ctheo \eta  N^{-1/2} }   \eqsp.
  \end{equation*} 
Here $\fc$ and $c$ are defined by \eqref{eq:def_f_concave} and \eqref{eq:def_cont_rate}, respectively, and 
$\ctheo$ is a constant that depends only on the dimension $d$, the second moment of $\nu$, as well as $V$ and $W$.
\end{theorem}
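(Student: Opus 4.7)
The plan is to bound both Wasserstein distances simultaneously by coupling the particle system with $N$ independent copies of the nonlinear SDE. Concretely, on a common probability space I construct $(\barX_t^i)_{i=1}^N$ as $N$ i.i.d.\ solutions of \eqref{eq:non_linear_sde} with initial law $\nu$, with the pairs $(X_0^{i,N},\barX_0^i)$ drawn independently across $i$ from an optimal $\wasserstein_{\fc}$-coupling of $\mu$ and $\nu$. Each pair $(X_t^{i,N},\barX_t^i)$ is driven by a mixture of reflection and synchronous couplings in the spirit of \cite{Eberle2015, egz-16}: reflection noise along $e_t^i:=(X_t^{i,N}-\barX_t^i)/\|X_t^{i,N}-\barX_t^i\|$ in the contracting region, and synchronous noise elsewhere and on the orthogonal complement. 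The construction is arranged so that $(\barX_t^i)_{i=1}^N$ remain i.i.d.\ with law $\barmu_t^\nu$; the joint law of $(X_t^{i,N},\barX_t^i)_{i=1}^N$ is therefore an admissible element of $\Pi(\mathcal{L}(X_t^{1,N},\ldots,X_t^{N,N}),(\barmu_t^\nu)^{\otimes N})$, so it suffices to control $\PE[N^{-1}\sum_i\fc[\|X_t^{i,N}-\barX_t^i\|]]$; by exchangeability this quantity also dominates the single-particle distance $\wasserstein_{\fc}(\mu_t^{\mu,N},\barmu_t^\nu)$.

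Applying Itô's formula to $\fc[\|X_t^{i,N}-\barX_t^i\|]$, with the standard mollification of $\f$ at the kink $\RO$, and using \Cref{assum:kappa} together with the key inequality \eqref{eq:iode_f_concave}, one obtains
\begin{equation*}
\rmd\fc[\|X_t^{i,N}-\barX_t^i\|]\ \le\ -2\c\,\fc[\|X_t^{i,N}-\barX_t^i\|]\,\rmd t\,-\,\fcp[\|X_t^{i,N}-\barX_t^i\|]\ps{\Delta_t^i}{e_t^i}\,\rmd t\,+\,\rmd M_t^i\eqsp,
\end{equation*}
where $M_t^i$ is a local martingale and $\Delta_t^i:=N^{-1}\sum_{j=1}^N\nabla W(X_t^{i,N}-X_t^{j,N})-\nabla W\ast\mu_t(\barX_t^i)$. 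Decomposing $\Delta_t^i = A_t^i + B_t^i$ with $A_t^i:=N^{-1}\sum_j[\nabla W(X_t^{i,N}-X_t^{j,N})-\nabla W(\barX_t^i-\barX_t^j)]$ and $B_t^i:=N^{-1}\sum_j\nabla W(\barX_t^i-\barX_t^j)-\nabla W\ast\mu_t(\barX_t^i)$, the triangle inequality, the subadditivity of the concave function $\f$ with $\f(0)=0$, \Cref{assum:W}-\ref{assum:W_ii}, and the bound $|\fcp|\le 1$ together imply $\sum_i\|A_t^i\|\le 2\eta\sum_i\fc[\|X_t^{i,N}-\barX_t^i\|]$, which lowers the effective contraction rate from $2\c$ to $2(\c-\eta)>0$.

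For the stochastic fluctuation $B_t^i$, conditionally on $\barX_t^i$ the variables $\{\nabla W(\barX_t^i-\barX_t^j)\}_{j\ne i}$ are i.i.d.\ with mean $\nabla W\ast\mu_t(\barX_t^i)$, so \eqref{eq:bound_nabla_W} and a standard variance computation give $\PE[\|B_t^i\|]\le C\eta N^{-1/2}(1+\PE[\|\barX_t\|^2])^{1/2}$. Taking expectations in the Itô bound, summing over $i$ and dividing by $N$ then yields
\begin{equation*}
\tfrac{\rmd}{\rmd t}\PE\!\Bigl[N^{-1}\sum_{i=1}^N\fc[\|X_t^{i,N}-\barX_t^i\|]\Bigr]\ \le\ -2(\c-\eta)\,\PE\!\Bigl[N^{-1}\sum_{i=1}^N\fc[\|X_t^{i,N}-\barX_t^i\|]\Bigr]+\ctheo\,\eta\,N^{-1/2}\eqsp,
\end{equation*}
and Grönwall's lemma, together with the initial coupling forcing the left-hand side at $t=0$ to equal $\wasserstein_{\fc}(\nu,\mu)$, delivers both inequalities of the theorem. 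The main obstacle is to ensure that $\ctheo$ is \emph{time-independent}, which requires the uniform moment estimate $\sup_{t\ge 0}\PE[\|\barX_t\|^2]<\infty$; this is obtained via a Lyapunov argument on $\PE[\|\barX_t\|^2]$ using \eqref{eq:v_convex_inf} together with either \Cref{ass:confine_assum_convex_inf} (and the antisymmetry $\nabla W(-x)=-\nabla W(x)$ implied by \Cref{assum:W}-\ref{assum:W_i}) or the alternative condition $\eta<m_V/2$ combined with the linear growth bound \eqref{eq:bound_nabla_W}.
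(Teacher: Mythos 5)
Your proposal is correct and follows essentially the same route as the paper: the same $\delta$-interpolated reflection/synchronous coupling applied particle-wise with i.i.d.\ initial pairs, the same splitting of the interaction error into a Lipschitz part (controlled via \Cref{assum:W} and subadditivity of the concave $\f$, costing $\eta$ in the contraction rate) and a law-of-large-numbers fluctuation (controlled by a conditional variance computation together with the uniform second-moment bound), followed by Gr\"onwall. The only elided point is that your displayed It\^o inequality holds only up to additive error terms of the form $\omega(\delta)+2\c\,\fc[\delta]$ coming from the region where the coupling is synchronous, which must be sent to zero with $\delta$ at the end, exactly as in the paper.
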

By \eqref{flinear} and the definition of $\wasserstein_{\f}$, we immediately obtain:
\begin{corollary}
  Under the same hypotheses as in \Cref{theo:unif_prop}, for all $t \geq 0$ and $N \in \nset$, 
  \begin{equation*}
    \wasserstein_{1}(\mu_t^{\mu,N},\barmu_{t}^{\nu} )\ \leq\    2\varphi (R_0)^{-1}\rme^{-2(\c-\eta)t} \wasserstein_{\fc}(\nu , \mu )
+ (\varphi (R_0)(\c-\eta))^{-1}{ \ctheo \eta  N^{-1/2} }   \eqsp,
  \end{equation*}
  where $\RZ$ and $\varphi$ are defined by \eqref{eq:def_RZ} and \eqref{eq:def_phi_concave}, respectively, and 
$\ctheo$ is a constant only depending on $d$, the second moment of $\nu$, as well as $V$ and $W$.
\end{corollary}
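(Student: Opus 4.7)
The corollary is an immediate consequence of \Cref{theo:unif_prop} together with the lower bound in \eqref{flinear}, so the plan is to convert the $\wasserstein_{\fc}$ bound into a $\wasserstein_1$ bound via a pointwise comparison of the cost functions.

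First I would record the key one-line observation: by \eqref{flinear}, for every $r\ge 0$ we have $\varphi(\RZ)\,r/2 \le \fc[r]$, i.e.\ $r\le 2\varphi(\RZ)^{-1}\fc[r]$. This pointwise inequality lifts to couplings: for any $\xi\in\Pi(\nu,\mu)$ on $\rset^d\times\rset^d$,
\begin{equation*}
\int \norm{x-y}\,\rmd\xi(x,y)\ \le\ 2\varphi(\RZ)^{-1}\int \fc[\norm{x-y}]\,\rmd\xi(x,y),
\end{equation*}
and taking the infimum over $\xi$ gives $\wasserstein_1(\nu,\mu)\le 2\varphi(\RZ)^{-1}\wasserstein_{\fc}(\nu,\mu)$.

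Next I would apply this comparison to $\nu=\mu_t^{\mu,N}$ and $\mu=\barmu_t^{\nu}$ and plug in the bound on $\wasserstein_{\fc}(\mu_t^{\mu,N},\barmu_t^{\nu})$ from \Cref{theo:unif_prop}. This yields
\begin{equation*}
\wasserstein_1(\mu_t^{\mu,N},\barmu_t^{\nu})\ \le\ 2\varphi(\RZ)^{-1}\Bigl[\rme^{-2(\c-\eta)t}\wasserstein_{\fc}(\nu,\mu)+(2(\c-\eta))^{-1}\ctheo\,\eta\,N^{-1/2}\Bigr],
\end{equation*}
which after simplification is exactly the stated inequality. The constant $\ctheo$ inherits its dependencies from \Cref{theo:unif_prop}, so it still depends only on $d$, the second moment of $\nu$, and $V,W$.

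There is no real obstacle here; the only point to be careful about is to leave the $\wasserstein_{\fc}(\nu,\mu)$ term in the initial-condition part in the form given by the theorem (since the corollary's statement keeps that term in $\wasserstein_{\fc}$ as well), and only to apply the comparison \eqref{flinear} on the left-hand side. Should one instead wish a fully $\wasserstein_1$-based initial datum, one would additionally use the upper bound $\fc[r]\le r$ from \eqref{flinear} to obtain $\wasserstein_{\fc}(\nu,\mu)\le\wasserstein_1(\nu,\mu)$, but this is not needed for the statement as worded.
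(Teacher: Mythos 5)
Your proposal is correct and is exactly the paper's (one-line) argument: use the lower bound $\varphi(\RZ)\,r/2\le \fc[r]$ from \eqref{flinear} to get $\wasserstein_{1}\le 2\varphi(\RZ)^{-1}\wasserstein_{\fc}$, apply this to the left-hand side only, and insert the bound of \Cref{theo:unif_prop}, which gives the stated constants after simplification.
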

The dependence of the bounds on $N$ is of the right order and consistent with both the non uniform in time estimates, and the uniform in time bounds under uniform strict convexity for $L^2$ Wasserstein distances obtained for example in \cite{Szn91,malrieu03}.

\begin{remark}\label{rem:extensions}
For the sake of clarity  and comparison with other recent works on uniform propagation of chaos \cite{malrieu03,cgm-08}, we have restricted ourselves to interaction drifts of the type $b(x,\mu)=-\nabla W*\mu$. Similarly to \cite{egz-16}, the present result can be extended to $b(x,\mu)=\int b(x,y)\mu(dy)$ if $b$ satisfies a Lipschitz condition with sufficiently small Lip\-schitz constant. Similarly, the confinement drift $-\nabla V$ can be replaced by a non-gradient drift satisfying a corresponding assumption as \Cref{assum:kappa}.
\end{remark}

\begin{remark}
 In \cite{cmcv-03,malrieu03,cgm-08}, it has been shown that for a fixed center of mass, exponential contractivity of the nonlinear SDE and uniform propagation of chaos hold if 
$V+2W$ is, for example, strictly uniformly convex. This suggests that convexity 
of the interaction potential $W$ can make up for non-convexity of $V$, a fact that is not visible from our current approach. The problem is that
a symmetrization trick is used to get a benefit from the convexity of the interaction potential. This trick does not carry over in the same form to the $\ell_1$ type distances considered here. We will address this challenging issue in a future work.
\end{remark}

\begin{remark}
By combining the result of Therorem \ref{theo:unif_prop} and the contraction result for mean-field particle systems in \cite[Corollary 3.4]{Eberle2015}, one can derive a contraction result for the non-linear equation \eqref{eq:non_linear_pde}, recovering essentially \cite[Theorem 2.3]{egz-16}.
\end{remark}


%

\section{Proofs}\label{sec:proofs}
The proof of the main result is based on three ingredients. As usual, the starting construction is to consider i.i.d.\ copies of the nonlinear process and to couple them with the system of particles. This will be done by considering a reflection coupling coordinate by coordinate. The second ingredient will then be to consider a specific Kantorovich ($L^1$ Wasserstein) distance based on an $\ell_1$ type metric on the product space that is particularly suited to this coupling. The last ingredient is a law of large numbers type control in $L^2$. Let us first define the reflection coupling between the independent nonlinear processes and the mean-field particle system.

\subsection{Coupling by reflection}\label{sec:reflectioncoupling}
For every $\delta >0$, we consider Lipschitz continuous functions $\phis^{\delta},\phir^{\delta}: \rset^d \to \rset$ satisfying
\begin{equation}
\label{eq:phirs}
(\phis^{\delta})^{2}(x) + (\phir^{\delta})^{2} (x) = 1 \eqsp \text{ for all $x \in \rset^d$},\qquad
\phir^{\delta} (x) =\begin{cases} 1 & \text{if } \norm{x} \geq \delta \eqsp, \\ 
 0 \eqsp &\text{if } \norm{x} \leq \delta/2 \eqsp.\end{cases}
\end{equation}
Now fix $\delta >0$, probability measures $\nu ,\mu$ on $\mathbb R^d$ with finite fourth moment, and a coupling $\xi\in \Pi (\nu ,\mu )$. We consider the coupling between the independent nonlinear processes and the mean-field particle system defined by the following system of stochastic differential equations:
\begin{eqnarray*}
\rmd \barX_s^{i} &=& -\nabla V(\barX_s^{i}) \rmd s - \nabla W \ast \barmu_{s}^{\nu} (\barX_s^{i}) \rmd s  + \sqrt{2} \defEns{ \phir^{\delta}(\diffX^{i}_t) \rmd B_s^{i} + \phis^{\delta}(\diffX^{i}_t) \rmd \tildeB_s^{i}} ,\\
\rmd X_s^{i,N} &=& -\nabla V(X_s^{i,N}) \rmd s - \frac 1N \sum\nolimits_{j=1}^N \nabla W (X_s^{i,N}-X_s^{j,N}) \rmd s  \\
&& \phantom{-\nabla V(X_s^{i,N}) \rmd s - N^{-1} }
+ \sqrt{2} \defEns{ \phir^{\delta}(\diffX^{i}_t)\parenthese{\Id-2 \diffXn^{i}_t (\diffXn^{i}_t)^{\transpose} } \rmd B_s^{i} + \phis^{\delta}(\diffX^{i}_t) \rmd \tildeB_s^{i}} \eqsp.
\end{eqnarray*}
Here we assume that
$(\bar X_0^{i},X_0^{i,N})$, $i \in \{1,\ldots , N\}$, are independent random variables with law $\xi$, $(B^i_s)_{s \geq 0}$ and $(\tildeB_s^i)_{s \geq 0}$, $i \in \{1,\ldots , N\}$,
are independent standard Brownian motions in $\mathbb R^d$ that are also independent of the initial conditions,
and
\begin{equation}
  \label{eq:diffX}
  \diffX_t^{i} = \barX_t^{i} - X_t^{i,N} \eqsp, \quad \diffXn_t^{i} = \mathrm{n}(\diffX_t^{i}) \eqsp, 
\end{equation}
where $\mathrm{n} : \rset^d \to \rset^d$ is given for all $x \in \rset^d\setminus\{ 0\}$ by $\mathrm{n}(x) = x / \norm{x}$, and $\mathrm{n}(0) = 0$.\smallskip 

Under the fourth moment assumption on $\nu$ and $\mu$, the system of SDEs has a unique strong solution. This can be shown similarly to \cite[Theorem 2.6]{cgm-08}.
By \eqref{eq:phirs} and L\'evy's characterization of Brownian motion, the process 
$((\barX_t^{1},\ldots ,\barX_t^{N} )_{t\ge 0}, (X_t^{1,N},\ldots ,X_t^{N,N})_{t\ge 0})$ is indeed a realization of a coupling between the system of independent nonlinear diffusions with initial law $\nu^{\otimes N}$ and the mean-field particle system with
initial law $\mu^{\otimes N}$. In particular,
\begin{equation}\label{eq:couplinglaws}
 \bar\mu_t^\nu=  \lawX{\bar X^i}[t]\quad\text{ and }\quad\mu_t^{\mu ,N}=  \lawX{X^{i,N}}[t]\qquad\text{for all }t\ge 0\text{ and }i \in \{1,\ldots , N\} \eqsp.
\end{equation}

\begin{lemma}\label{lem:mom}
  \label{lem:reg_O}
  Assume \Cref{assum:kappa} and \Cref{assum:W}. Then almost surely, for all $t \geq 0$ and $i \in \{1,\ldots,N\}$, 
$$
\rmd   \norm{\diffX^{i}_t} \ =\ - \ps{\nabla V(\barX_s^{i})-\nabla V(X_s^{i,N})}{\diffXn^{i}_t}\, \rmd t \, +\, A_t^i\, \rmd t\,  +\, 2 \sqrt{2}  \phir^{\delta}(\diffX^{i}_t) (\diffXn^{i}_t)^{\transpose}  \rmd B_t^{i} \eqsp,$$
where $(A_t^i)_{t\ge 0}$ is an adapted stochastic process such that
\begin{equation}\label{eq:Ati}
A^i_t \ \le \
\left\|{\nabla W \ast \bar\mu_t^\nu (\barX_t^{i}) -  \frac 1N \sum\nolimits_{j=1}^N \nabla W (X_t^{i,N}-X_t^{j,N})}\right\| \eqsp.
\end{equation}
\end{lemma}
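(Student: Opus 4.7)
The plan is to derive the SDE for the difference $\diffX^i_t = \barX^i_t - X^{i,N}_t$ by subtracting the two coupling equations, and then to apply It\^o's formula (with a standard regularization at the origin) to the non-smooth function $x\mapsto\|x\|$. The computation hinges on a cancellation that is characteristic of reflection coupling.

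First I would subtract the two SDEs. The $\phis^\delta$-driven terms cancel outright, and since $\Id - (\Id - 2\diffXn^i_t(\diffXn^i_t)^\transpose) = 2\diffXn^i_t(\diffXn^i_t)^\transpose$, the $B^i$-driven noise for the difference collapses to $2\sqrt{2}\,\phir^\delta(\diffX^i_t)\,\diffXn^i_t(\diffXn^i_t)^\transpose\,\rmd B^i_t$. Thus
\begin{equation*}
\rmd\diffX^i_t \;=\; -[\nabla V(\barX^i_t)-\nabla V(X^{i,N}_t)]\,\rmd t \;-\; D^i_t\,\rmd t \;+\; 2\sqrt{2}\,\phir^\delta(\diffX^i_t)\,\diffXn^i_t(\diffXn^i_t)^\transpose\,\rmd B^i_t,
\end{equation*}
where $D^i_t := \nabla W\ast\barmu^\nu_t(\barX^i_t) - N^{-1}\sum_{j=1}^N \nabla W(X^{i,N}_t - X^{j,N}_t)$.

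Next I would apply It\^o's formula to $\|\diffX^i_t\|$. Since $\|\cdot\|$ fails to be $C^2$ at zero, the standard remedy is to work with the regularization $F_\varepsilon(x) = \sqrt{\|x\|^2+\varepsilon^2}$ and then pass $\varepsilon\downarrow 0$. This is painless because, by \eqref{eq:phirs}, the diffusion coefficient above is identically zero on the set $\{\|\diffX^i_t\|\leq\delta/2\}$, so the martingale contribution causes no difficulty near the origin, and on the complementary set the norm is smooth. For $x\neq 0$ one has $\nabla\|x\| = \mathrm{n}(x)$ and $\nabla^2\|x\| = \|x\|^{-1}(\Id - \mathrm{n}(x)\mathrm{n}(x)^\transpose)$. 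Dotting the gradient with the two drift terms yields $-\langle\nabla V(\barX^i_t)-\nabla V(X^{i,N}_t),\diffXn^i_t\rangle$ and $A^i_t := -\langle D^i_t,\diffXn^i_t\rangle$; Cauchy--Schwarz together with $\|\diffXn^i_t\|\leq 1$ then gives $|A^i_t|\leq\|D^i_t\|$, which is exactly the bound \eqref{eq:Ati}. Dotting the gradient with the noise coefficient gives $2\sqrt{2}\,\phir^\delta(\diffX^i_t)(\diffXn^i_t)^\transpose\,\rmd B^i_t$ since $(\diffXn^i_t)^\transpose\diffXn^i_t(\diffXn^i_t)^\transpose = (\diffXn^i_t)^\transpose$.

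The key step, and really the only nontrivial one, is that the It\^o correction vanishes identically. Writing $n = \diffXn^i_t$ and $\sigma = 2\sqrt{2}\phir^\delta(\diffX^i_t)\,nn^\transpose$, I would compute $\sigma\sigma^\transpose = 8\,\phir^\delta(\diffX^i_t)^2\,nn^\transpose$ (using $n^\transpose n=1$ on the support of $\phir^\delta$) and observe that
\begin{equation*}
\tfrac12\operatorname{tr}\bigl(\sigma\sigma^\transpose\,\nabla^2\|\cdot\|(\diffX^i_t)\bigr) \;=\; 4\,\phir^\delta(\diffX^i_t)^2\,\|\diffX^i_t\|^{-1}\,\operatorname{tr}\bigl(nn^\transpose(\Id-nn^\transpose)\bigr) \;=\; 0.
\end{equation*}
This geometric cancellation---the perpendicularity of the Hessian of $\|\cdot\|$ to the rank-one reflection noise---is the feature that makes $\|\diffX^i_t\|$ evolve without any extra quadratic-variation drift, and it is the whole point of reflection coupling. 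Once it is in hand, collecting the three surviving contributions produces the claimed identity, and the estimate on $A^i_t$ is immediate. The only real obstacle is justifying the pointwise identity at $\diffX^i_t=0$, which is handled by the cutoff provided by $\phir^\delta$ and the regularization argument outlined above.
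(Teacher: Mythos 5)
Your proposal is correct and follows essentially the same route as the paper: the regularization $F_\varepsilon(x)=\sqrt{\|x\|^2+\varepsilon^2}$ is exactly the paper's $\psi_a(\|x\|^2)=(\|x\|^2+a)^{1/2}$ with $a=\varepsilon^2$, and your observation that $\operatorname{tr}\bigl(nn^\transpose(\Id-nn^\transpose)\bigr)=0$ is the limiting form of the paper's computation $8\psi_a'(r^2)+16r^2\psi_a''(r^2)=4a/(r^2+a)^{3/2}\to 0$, with the cutoff $\phir^\delta$ justifying the passage to the limit near the origin in both cases. The only cosmetic difference is that the paper first writes the Itô formula for $\|\diffX^i_t\|^2$ and then composes with $\psi_a$, while you regularize $\|\cdot\|$ directly.
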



\begin{proof}
For all $i \in \defEns{1,\ldots,N}$, using Itô's formula, we have 
\begin{multline*}
\rmd   \norm[2]{\diffX^{i}_t} = - 2 \ps{\nabla V(\barX_t^{i})-\nabla V(X_t^{i,N})}{\diffX^{i}_t} \rmd t \\
 - 2\ps{\nabla W \ast \bar\mu_t^\nu (\barX_t^{i}) -  N^{-1} \sum\nolimits_{j=1}^N \nabla W (X_t^{i,N}-X_t^{j,N})}{\diffX^{i}_t} \rmd t\\
  + 4 \sqrt{2}  \phir^{\delta}(\diffX^{i}_t) \ps{ \diffXn^{i}_t}{\diffX^{i}_t} (\diffXn^{i}_t)^{\transpose}  \rmd B_t^{i} + 8  (\phir^{\delta}(\diffX^{i}_t))^2 \rmd t \eqsp.
\end{multline*}
Now let $a>0$, and consider the function $\psi_a(r) = (r+a)^{1/2}$ for
$r \geq 0$. Note that $\psi_a$ is infinitely
continuously differentiable on $(0,\infty)$, and for all $t \geq 0$,
$\lim_{a \to 0} \psi_a(t) = t^{1/2}$. Therefore, using again Itô's
formula, we get
\begin{align}
\nonumber
\rmd   \psi_a\parenthese{\norm[2]{\diffX^{i}_t}} &= - 2 \psi_a'\parenthese{\norm[2]{\diffX^{i}_t}} \ps{\nabla V(\barX_t^{i})-\nabla V(X_t^{i,N})}{\diffX^{i}_t} \rmd t \\
\label{eq:reg_O_2}
& - 2\psi_a'\parenthese{\norm[2]{\diffX^{i}_t}} \ps{\nabla W \ast \bar\mu_t^\nu (\barX_t^{i}) -  N^{-1} \sum\nolimits_{j=1}^N \nabla W (X_t^{i,N}-X_t^{j,N})}{\diffX^{i}_t} \rmd t\\
\label{eq:reg_O_4}
 &+  (\phir^{\delta}(\diffX^{i}_t))^2 \defEns{8 \psi_a'\parenthese{\norm[2]{\diffX^{i}_t}} +16 \norm[2]{\diffX^{i}_t}\psi_a''\parenthese{\norm[2]{\diffX^{i}_t}}} \rmd t\\
\label{eq:reg_O_3}
 & + 4 \sqrt{2} \psi_a' \parenthese{\norm[2]{\diffX^{i}_t}}  \phir^{\delta}(\diffX^{i}_t) \ps{ \diffXn^{i}_t}{\diffX^{i}_t} (\diffXn^{i}_t)^{\transpose}  \rmd B_t^{i} \eqsp. 
\end{align}
Note that $2r\psi_a'(r^2)={r}/{\sqrt{r^2+a}}\le 1/2$ for all $a,r > 0$. In particular,  
\begin{equation*}
\left|  2\psi_a'\parenthese{\norm[2]{\diffX^{i}_t}} \ps{\nabla V(\barX_t^{i})-\nabla V(X_t^{i,N})}{\diffX^{i}_t} \right|
\ \leq\ \norm{\nabla V(\barX_t^{i})-\nabla V(X_t^{i,N})} \eqsp.
\end{equation*}
Therefore, by dominated convergence, we see that that almost surely for all $T \geq 0$,
$$
  \lim_{a \to 0} \int_{0}^T  2\psi_a'\parenthese{\norm[2]{\diffX^{i}_t}} \ps{\nabla V(\barX_t^{i})-\nabla V(X_t^{i,N})}{\diffX^{i}_t} \rmd t 
=  \int_{0}^T  \ps{\nabla V(\barX_t^{i})-\nabla V(X_t^{i,N})}{\diffXn^{i}_t} \rmd t \eqsp.
$$
Furthermore, for any $a>0$, the term in \eqref{eq:reg_O_2} is bounded from above by the expression on the right hand side of \eqref{eq:Ati}. Moreover, noting that 
$\phir^{\delta}(z) = 0$ for $z \in \rset^d$ with $\norm{z} \leq \delta/2$, and $8\psi_a'(r^2)+16r^2\psi_a''(r^2)=4a/(r^2+a)^{3/2}\le 4a/r^3$ for $a,r>0$, we see 
that for $T\ge 0$,
$$
  \lim_{a \to 0} \int_{0}^T   (\phir^{\delta}(\diffX^{i}_t))^2 \defEns{8 \psi_a'\parenthese{\norm[2]{\diffX^{i}_t}} +16 \norm[2]{\diffX^{i}_t}\psi_a''\parenthese{\norm[2]{\diffX^{i}_t}}} \rmd t  
\ =\  0 \eqsp.
$$
Finally, by \cite[Theorem 2.12]{revuz:yoz:1999}, we have almost surely
$$
  \lim_{a \to 0} \int_0^T4 \sqrt{2} \psi_a' \parenthese{\norm[2]{\diffX^{i}_t}}  \phir^{\delta}(\diffX^{i}_t) \ps{ \diffXn^{i}_t}{\diffX^{i}_t} (\diffXn^{i}_t)^{\transpose}  \rmd B_t^{i}
\ =\  \int_0^T 2 \sqrt{2}  \phir^{\delta}(\diffX^{i}_t) (\diffXn^{i}_t)^{\transpose}  \rmd B_t^{i} \eqsp,
$$
for any $T\ge 0$. This completes the proof of the lemma.
\end{proof}

\subsection{A moment control}
The following uniform moment bound will be important for the proof of our main result.

\begin{lemma}\label{lem:momentbound}
  Assume \Cref{assum:kappa} and \Cref{assum:W}, and suppose that $\eta\in \ooint{0,m_V/2}$ or Assumption \Cref{ass:confine_assum_convex_inf} is satisfied. 
 Let $(\bar X_t)_{t \geq 0}$ be a solution of
  \eqref{eq:non_linear_sde} with  $\expe{\|\bar X_0\|^2}<\infty$.
  Then there exists $C\in (0,\infty )$ depending only on $d,V,W$ and the second moment of $\bar X_0$ such that
  $$\sup\nolimits_{t\ge 0} \expe{\|\bar X_t\|^2}\le C \eqsp.$$
\end{lemma}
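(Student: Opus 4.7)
The plan is to apply Itô's formula to $\norm{\bar X_t}^2$, take expectations, and then use the monotonicity properties of $\nabla V$ and $\nabla W$ to obtain a linear differential inequality of the form $\frac{d}{dt}\expe{\norm{\bar X_t}^2} \le -\alpha\, \expe{\norm{\bar X_t}^2} + \beta$ with $\alpha>0$, so that Gronwall gives the uniform bound. The fact that $\sup_{t\in[0,T]}\expe{\norm{\bar X_t}^2}<\infty$ for every $T$, ensured by \eqref{eq:bound_moment_T_fixed_solution}, justifies taking expectations (the stochastic integral is a true martingale on $[0,T]$).

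Expanding Itô's formula yields
\begin{equation*}
\frac{d}{dt}\expe{\norm{\bar X_t}^2}
\ =\ -2\expe{\ps{\nabla V(\bar X_t)}{\bar X_t}}
\ -\ 2\expe{\ps{\nabla W\ast\mu_t(\bar X_t)}{\bar X_t}}\ +\ 2d.
\end{equation*}
For the confinement term I apply \eqref{eq:v_convex_inf} with $y=0$ together with Young's inequality $|\ps{\nabla V(0)}{x}|\le \frac{\epsilon}{2}\|x\|^2+\frac{1}{2\epsilon}\|\nabla V(0)\|^2$ to get
\begin{equation*}
\ps{\nabla V(x)}{x}\ \ge\ (\mV-\epsilon/2)\|x\|^2\ -\ \MV\ -\ \tfrac{1}{2\epsilon}\|\nabla V(0)\|^2,
\end{equation*}
valid for any $\epsilon>0$ I will fix below.

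The interaction term splits into the two cases stated in the hypothesis. If $\eta\in(0,\mV/2)$, I use \eqref{eq:bound_nabla_W}, Cauchy–Schwarz and the fact that if $\bar X'_t$ is an independent copy of $\bar X_t$ then $\expe{\|\bar X_t\|\|\bar X'_t\|}\le\expe{\|\bar X_t\|^2}$, which gives
\begin{equation*}
\bigl|\expe{\ps{\nabla W\ast\mu_t(\bar X_t)}{\bar X_t}}\bigr|
\ \le\ \eta\expe{\|\bar X_t\|\,\|\bar X_t-\bar X'_t\|}\ \le\ 2\eta\,\expe{\|\bar X_t\|^2}.
\end{equation*}
Then choosing $\epsilon=\mV-2\eta>0$ makes the coefficient of $\expe{\|\bar X_t\|^2}$ equal to $-(\mV-2\eta)<0$. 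If instead \Cref{ass:confine_assum_convex_inf} holds, I exploit the symmetry of $W$ (so $\nabla W$ is odd) to symmetrize:
\begin{equation*}
\expe{\ps{\nabla W\ast\mu_t(\bar X_t)}{\bar X_t}}
\ =\ \tfrac12\iint\ps{\nabla W(x-y)}{x-y}\,d\mu_t(x)d\mu_t(y)\ \ge\ -\MW/2,
\end{equation*}
using \Cref{ass:confine_assum_convex_inf} with $(x-y,0)$. Choosing any $\epsilon\in(0,2\mV)$ again yields a negative coefficient on $\expe{\|\bar X_t\|^2}$.

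In both cases I arrive at a differential inequality of the form $\tfrac{d}{dt}\expe{\|\bar X_t\|^2}\le -\alpha\,\expe{\|\bar X_t\|^2}+\beta$ with $\alpha>0$ and with $\alpha,\beta$ depending only on $d$, $V$, $W$; Gronwall then gives $\expe{\|\bar X_t\|^2}\le \expe{\|\bar X_0\|^2}+\beta/\alpha$ uniformly in $t\ge 0$. The main (minor) obstacle is bookkeeping: one must choose the Young parameter $\epsilon$ so that the contraction from $V$ strictly dominates the growth contribution $4\eta$ coming from $W$ in Case~A, which is exactly what the assumption $\eta<\mV/2$ guarantees. The symmetrization step in Case~B is the only place where the full symmetry of $W$ is essential, and it should be noted separately.
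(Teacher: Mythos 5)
Your proposal is correct and follows essentially the same route as the paper's proof: Itô's formula for $\|\bar X_t\|^2$, a symmetrization of the interaction term exploiting the oddness of $\nabla W$, the same case split between \Cref{ass:confine_assum_convex_inf} and $\eta<m_V/2$, and Gronwall's lemma. The only (cosmetic) differences are that you absorb the $\|\nabla V(0)\|\,\|x\|$ cross term via Young's inequality rather than carrying a $\expe{\|\bar X_t\|^2}^{1/2}$ term into Gronwall, and that in the case $\eta<m_V/2$ you bound the interaction directly by \eqref{eq:bound_nabla_W} instead of symmetrizing first.
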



\begin{proof}
The proof is quite standard but we include it here for completeness. First, by It\^o's formula, we have
$$(1/2) \rmd \|\bar X_t\|^2=-\langle \bar X_t,\nabla V(\bar X_t)\rangle \rmd t-\langle \bar X_t,\nabla W*\bar\mu^\nu_t(\bar X_s)\rangle \rmd t +d\, \rmd t+ \sqrt{2}\bar X_t^{\transpose} \rmd B_t \eqsp , $$
where $\nu$ denotes the law of $\bar X_0$. Let $(\tilde{X}_t)_{ t \geq 0} $ be an independent copy of $(\bar X_t)_{t \geq 0}$. Then by symmetrization and \eqref{eq:bound_moment_T_fixed_solution}, we get taking the expectation that
$$\frac{\rmd}{\rmd t} \expe{\|\bar X_t\|^2}\ =\ -2 \expe{\langle \bar X_t,\nabla V(\bar X_t)\rangle} -\expe{\langle \bar X_t- \tilde X_t,\nabla W(\bar X_t - \tilde X_t)\rangle } +2 d \eqsp.$$
Now suppose first that \Cref{assum:kappa}, \Cref{assum:W}
and \Cref{ass:confine_assum_convex_inf} are satisfied. Then
 by \eqref{eq:v_convex_inf} and since $\nabla W(0)=0$, 
  \begin{equation*}
   \frac{\rmd}{\rmd t} \expe{\|\bar X_t\|^2}  \leq 2M_V+M_W+2d-2 m_V \expe{\|\bar X_t\|^2} +2 \norm{\nabla V(0)}\expe{ \|\bar X_t\|^2}^{1/2}   \eqsp.
  \end{equation*}
Hence Gronwall's lemma concludes the proof.\smallskip

Alternatively, assume only that \Cref{assum:kappa} and \Cref{assum:W} are satisfied. Since $f(r)\le r$, we obtain
 \begin{equation*}
   \frac{\rmd}{\rmd t} \expe{\|\bar X_t\|^2}  \leq 2M_V+2d+2(\eta- m_V) \expe{\|\bar X_t\|^2} +2 \norm{\nabla V(0)}\expe{ \|\bar X_t\|^2}^{1/2}   \eqsp.
  \end{equation*}
  Hence for $\eta < m_V/2$, we can still apply Gronwall's lemma to conclude the proof.
\end{proof}

\subsection{Proof of main results}
We can now prove our main result.


\begin{proof}[Proof of \Cref{theo:unif_prop}]
We fix $\delta >0$ and a coupling $\xi\in\Pi (\nu ,\mu )$, and we consider the
reflection coupling with initial law $\xi^{\otimes N}$ between the system of nonlinear processes and the mean-field particle system as introduced in Subsection \ref{sec:reflectioncoupling}.
Since $\fc$ is continuously differentiable and $\fcp$ is absolutely continuous, by \Cref{lem:reg_O} and the Itô-Tanaka formula, we obtain
$$
\rmd \fc[\norm{\diffX^{i}_t}] \, =\,  (C_t^i+A_t^i)\, \fcp[\norm{\diffX^{i}_t}]\rmd t + 4 \fcpp[\norm{\diffX^{i}_t}] (\phir^{\delta}(\diffX^{i}_t))^{2} \rmd t  +  \sqrt{8}  \phir^{\delta}(\diffX^{i}_t)  (\diffXn^{i}_t)^{\transpose}  \rmd B_t^{i}\eqsp,$$
{where} $C_t^i = - \ps{\nabla V(\barX_t^{i})-\nabla V(X_t^{i,N})}{\diffXn^{i}_t}$.
Define $\omega : \rset_+ \to \rset_+$ by  
\begin{equation}
  \label{eq:def_omega}
  \omega(r) = \sup\nolimits_{s \in \ccint{0,r} } s \kappa(s)^- \eqsp.
\end{equation}
By definition of $\kappa$ given by \eqref{eq:def_kappa}, and 
by \eqref{eq:iode_f_concave}, 
\begin{eqnarray}\nonumber
\lefteqn{ C_t^i\, \fcp[\norm{\diffX^{i}_t}]\, +\, 4 \fcpp[\norm{\diffX^{i}_t}] (\phir^{\delta}(\diffX^{i}_t))^{2} }\\
 &\le & 
-\norm{\diffX^{i}_t} \kappa(\norm{\diffX^{i}_t})  
 \fcp[\norm{\diffX^{i}_t}]
\, +\, 4 \fcpp[\norm{\diffX^{i}_t}] (\phir^{\delta}(\diffX^{i}_t))^{2}\label{eq:Cbound} \\
&\le &-2c \fc[\norm{\diffX^{i}_t}](\phir^{\delta}(\diffX^{i}_t))^{2}  + \omega(\delta) 
 \\& \le&
-2c \fc[\norm{\diffX^{i}_t}]  + \omega(\delta)  + 2 c \fc[\delta]
\, .\nonumber
\end{eqnarray}
Moreover, by \eqref{eq:Ati}, we can estimate $A_t^i \, \le\, N^{-1} \sum\nolimits_{j=1}^N \Xi_t^{i,j} + \Upsilon_t^{i}$, \ where
\begin{align*}
\Xi_{t}^{i,j} &\ =\  \norm{ \nabla W (\barX_t^{i,N}-\barX_t^{j,N}) -\nabla W (X_t^{i,N}-X_t^{j,N})},\\
\Upsilon^i_t&\ =\ \norm{\nabla W \ast \bar\mu_t^\nu (\barX_t^{i}) -  N^{-1} \sum\nolimits_{j=1}^N \nabla W (\barX_t^{i}-\barX_t^{j})} \eqsp.
\end{align*}
Therefore, and since $(\int_{0}^t \phir^{\delta}(\diffX^{i}_s)  (\diffXn^{i}_s)^{\transpose} \rmd
B_s^{i})_{t \geq 0}$ is a martingale and $f'\le 1$, we obtain
\begin{eqnarray}
\nonumber
 \frac 1N\sum_{i=1}^N \frac{\rmd}{\rmd t}\expe{\fc[\norm{\diffX^{i}_t}]}  & \leq & - \frac {2c}N\sum_{i=1}^N\expe{ \fc[\norm{\diffX^{i}_t}]}\, +\, \frac 1N \sum_{i=1}^N  \expe{\frac 1N \sum_{j=1}^N \Xi_t^{i,j} + \Upsilon_t^{i} }  \\   &&\qquad + {\omega(\delta)  + 2 c \fc[\delta]} 
\label{eq:1}
\end{eqnarray}
for a.e.\ $t\ge 0$. By Assumption \Cref{assum:W}, for all $i,j \in \{1,\ldots,N\}$ and $t \geq 0$, 
\begin{equation}\label{eq:Xibound}
{\Xi^{i,j}_t} \
 \leq\ \eta\,  \fc[\norm{\diffX^{i}_t}+\norm{\diffX^{j}_t}]\ \le\ \eta\, \left(\fc [\norm{\diffX^{i}_t}]+\fc[\normLigne{\diffX^{j}_t}]\right) \eqsp.
\end{equation}
Moreover, in order to control $\Upsilon_t^i$, we remark that given $\bar X_t^i$, the random variables $\barX_t^{j}$, $j\neq i$, are i.i.d.\ with law
$\bar\mu_t^\nu$. In particular,
$$\expe{\nabla W (\barX_t^{i}-\barX_t^{j})|\bar X_t^i}\ =\ \nabla W \ast \bar\mu_t^\nu (\barX_t^{i})\eqsp.$$
Since by \Cref{assum:W}, $\nabla W(0)=0$ and $\nabla W$ is Lipschitz with
constant $\eta$, we obtain 
\begin{eqnarray*}
\lefteqn{\expe{\left.\ \norm{\nabla W \ast \bar\mu_t^\nu (\barX_t^{i})-\frac{1}{N-1}\sum\nolimits_{j=1}^N\nabla W (\barX_t^{i}-\barX_t^{j})}^2\right|\, \bar X_t^i}}\\
&=&\frac{1}{N-1}\text{Var}_{\bar\mu_t^\nu}\left( \nabla W (\barX_t^{i}-\cdot)\right)\
\le\ \frac{\eta^2}{N-1}\int \norm{x}^2\, \bar\mu_t^\nu(\rmd x)\eqsp.
\end{eqnarray*}
Hence, by the Cauchy-Schwarz inequality and \eqref{eq:bound_nabla_W},
\begin{eqnarray*}
\expe{{\Upsilon_t^i} } &\le & \expe{\ \norm{\nabla W \ast \bar\mu_t^\nu (\barX_t^{i})-\frac{1}{N-1}\sum\nolimits_{j=1}^N\nabla W (\barX_t^{i}-\barX_t^{j})}}\\
&&+\left(\frac{1}{N-1}-\frac 1N\right)\sum\nolimits_{j=1}^N \expe{\eta\, \norm{\barX_t^{i}-\barX_t^{j}}}\\
&\le &\eta\, \left(\frac{1}{\sqrt{N-1}}+\frac{\sqrt 2}{N}\right)\, \left(\int \norm{x}^2\, \bar\mu_t^\nu(\rmd x)\right)^{1/2} \eqsp.
\end{eqnarray*}
By Lemma \ref{lem:momentbound}, we conclude that there is an explicit finite constant $\ctheo$ such that for $N\ge 2$,
\begin{equation}\label{eq:Upsilonbound}
\sup\nolimits_{t \geq 0} \expe{{\Upsilon_t^i} }
\
\leq\   \ctheo \eta N^{-1/2}  \eqsp,\qquad i=1,\ldots ,N \eqsp.
\end{equation}
Now, combining \eqref{eq:1}, \eqref{eq:Xibound} and \eqref{eq:Upsilonbound}, we finally obtain
%
%
\begin{eqnarray*}
 \frac{\rmd}{\rmd t}\, \frac 1N\sum\limits_{i=1}^N \expe{\fc[\norm{\diffX^{i}_t}]}
&  \leq &  -2\frac{\c-\eta }{N}\sum\limits_{i=1}^N \expe{ \fc[\norm{\diffX^{i}_t}]} 
  + {\omega(\delta) + 2 c \fc[\delta] + \frac{\ctheo \eta}{\sqrt N}   }
\end{eqnarray*}
for a.e.\ $t\ge 0$. Assuming $\eta < c$, we can conclude that
\begin{eqnarray*}
  \frac 1N\sum\nolimits_{i=1}^N \expe{\fc[\norm{\diffX^{i}_t}]} & \leq & \rme^{-2(\c-\eta)t}\frac 1N\sum\nolimits_{i=1}^N \expe{\fc[\norm{\diffX^{i}_0}]} \\
&&\qquad + (2(\c-\eta))^{-1}\parenthese{\omega(\delta)  + 2 c \fc[\delta] + \ctheo \eta  N^{-1/2} }.
\end{eqnarray*}
Noting that $\expe{\fc[\norm{\diffX^{i}_0}]}\le\int \|x-y\|\,d\xi (x, y)$ for all $i\in\{ 1,\ldots d\}$, we obtain
\begin{eqnarray*}
\lefteqn{ \wasserstein_{\ell^1 (\fc )}\left( \mathcal L (X_t^{1,N},\ldots, X_t^{N,N}),
    (\barmu_{t}^{\nu})^N  \right)}\\ 
    & \leq & \rme^{-2(\c-\eta)t}\int \|x-y\|\,d\xi (x, y)
+ (2(\c-\eta))^{-1}\parenthese{\omega(\delta)  + 2 c \fc[\delta] + \ctheo \eta  N^{-1/2} } \eqsp.
\end{eqnarray*}
By \eqref{eq:def_omega} and \Cref{assum:kappa}, $\lim_{\delta \to 0^+} \omega(\delta) = 0$. Hence taking the limit as $\delta$ goes to $0$, and the infimum over all couplings $\xi\in \Pi (\nu ,\mu )$ concludes the proof of the second inequality in \Cref{theo:unif_prop}. The first inequality follows similarly, noting that
by \eqref{eq:couplinglaws}, for all $i\in\{ 1,\ldots d\}$ and $t\ge 0$,
$$\mathcal{W}_f(\bar\mu_t^\nu,\mu_t^{\mu ,N})\ \le\ \expe{\fc[\norm{\bar X^{i}_t- X^{i,N}_t}]}\ =\ \expe{\fc[\norm{\diffX^{i}_t}]} \eqsp.$$
\end{proof}

\phantom{b}


{\bf Acknowledgment.} This work was initiated through a Procope project which is greatly acknowledged. The work was partially supported by ANR-17-CE40-0030, by the Hausdorff Center for Mathematics, and by the grant 346300 for IMPAN from the Simons Foundation and the matching 2015-2019 Polish MNiSW fund.

\nocite{*}
\bibliographystyle{plain}
\bibliography{bibliography}

\end{document}